\providecommand{\U}[1]{\protect\rule{.1in}{.1in}}
\theoremstyle{plain}
\newtheorem{corollary}{Corollary}
\newtheorem{lemma}{Lemma}
\newtheorem{proposition}{Proposition}
\newtheorem{theorem}{Theorem}
\numberwithin{equation}{section}
\begin{document}
\title[A lower bound for the nodal sets of Steklov eigenfunctions]{A lower bound for the nodal sets of Steklov eigenfunctions}
\author{Xing Wang and Jiuyi Zhu}
\address{
 Department of Mathematics\\
Johns Hopkins University\\
Baltimore, MD 21218, USA\\
Emails:  xwang@math.jhu.edu, jzhu43@math.jhu.edu}
\thanks{\noindent }
\date{}
\subjclass{ 58C40, 28A78, 35P15, 35R01  } \keywords {Nodal sets,
Lower bound, Dirichlet-to-Neumann map, Steklov eigenfunctions. }
\dedicatory{ }

\begin{abstract}
We consider the lower bound of nodal sets of Steklov eigenfunctions
on smooth Riemannian manifolds with boundary--the eigenfunctions of
the Dirichlet-to-Neumann map. Let $N_\lambda$ be its nodal set.
Assume that zero is a regular value of Steklov eigenfunctions. We
show that
$$H^{n-1}(N_\lambda)\geq C\lambda^{\frac{3-n}{2}}$$
for some positive constant $C$ depending only on the manifold.
\end{abstract}

\maketitle
\section{Introduction}
In this paper, we consider the lower bound estimates for nodal sets
$$N_\lambda=\{ x\in\mathcal{M}|\phi_\lambda=0\}$$
of the Steklov eigenfunctions on a smooth Riemannian manifold
$(\mathcal{N}, h)$ with boundary $(\mathcal{M}, g)$, where $dim
\mathcal{N}=n+1$ and $h|_{\mathcal{M}}=g$. The Steklov eigenvalue
problem is formulated as
\begin{equation}
\left\{
\begin{array}{lll}\triangle_h \phi_\lambda(x)=0, \quad x\in\mathcal{N}, \nonumber
\medskip \\
\frac{\partial \phi_\lambda}{\partial
\nu}(x)=\lambda\phi_\lambda(x), \quad x\in
\partial\mathcal{N}=\mathcal{M}. \nonumber
\end{array}
\right.
\end{equation}
Here, $\nu$ is an unit outer normal vector on $\mathcal{M}$.  The
Steklov eigenvalues can also be reduced to the boundary
$\mathcal{M}$. Then the $\phi_\lambda$ becomes the eigenfunction of
Dirichlet-to-Neumann operator, i.e.
$$\Lambda\phi_\lambda=\lambda\phi_\lambda.         $$
 The Dirichlet-to-Neumann operator $\Lambda$ is
defined as
\begin{equation}
\Lambda f=\frac{\partial}{\partial \nu}(Hf)|_{\mathcal{M}} \nonumber
\end{equation}
for $f\in H^{\frac{1}{2}}(\mathcal{M})$. $Hf$ is the harmonic
extension of $f$, i.e.
\begin{equation}
\left \{\begin{array}{lll}
\triangle_h u(x)=0, &\quad x\in \mathcal{N}, \nonumber \medskip\\
u(x)=f(x), &\quad x\in\partial\mathcal{N}=\mathcal{M} \nonumber
\end{array}
\right.
\end{equation}
with $u=Hf$. Moreover, the operator $\Lambda$ is a self-adjoint
operator from $H^{\frac{1}{2}}(\mathcal{M})$ to $
H^{-\frac{1}{2}}(\mathcal{M})$ and there exists an orthonormal basis
$\{ \phi_j\}$ of eigenfunctions such that
$$\Lambda \phi_j=\lambda_j \phi_j, \quad \phi_j\in C^\infty(\mathcal{M}), \quad
\int_{\mathcal{M}}\phi_j\phi_k\, dV_g=\delta_{jk}.    $$ The
eigenvalues $ 0=\lambda_0<\lambda_1\leq \lambda_2\leq
\lambda_3,\cdots, $ are ordered in ascending order with counted
multiplicity. For simplicity, we choose $n+1$ as the dimension of
$\mathcal{N}$, which is a little bit different from the previous
work by \cite{BL} and \cite{Zel}.

The nodal sets are zero level sets of eigenfunctions. We want to
study the asymptotical behavior of the size of nodal sets of Steklov
eigenfunctions for large $\lambda$. Recently, some remarkable
progresses have been made for the upper bound of the size of nodal
sets for analytic manifolds. Bellova and Lin \cite{BL} proved that
if $\mathcal{N}$ is an analytic domain in $\mathbb R^n$, then the
$H^{n-2}$-Hausdorff measure of nodal sets of Steklov eigenfunctions
has an upper bound of $C\lambda^6$ with $C$ depending only on
$\mathcal{N}$. Later on, Zelditch \cite{Zel} improved their results
and showed that the optimal upper bound for the nodal sets is
$C\lambda$ for real analytic manifolds. The optimality can be seen
from the case that the manifold is a ball.

So far, nothing seems to be known for the lower bound of the nodal
sets of Steklov eigenfunctions, even for analytic manifolds. The
main goal of our paper is to address the lower bound of nodal sets
over general compact smooth manifolds. Quite different from the case
for the Laplacian-Beltrami operator, the Dirichlet-to-Neumann
operator is a non-local operator, which causes additional
difficulty. Fortunately, since we are measuring the whole size of
the nodal sets which can be considered as ``partial global"
quantity, we are able to find a way to overcome the difficulty and
carry the argument through.

Let's first briefly review the literature concerning the nodal sets
of classical eigenfunctions. Let $\phi_\lambda$ be an $L^2$
normalized eigenfunctions of Laplacian-Beltrami on compact manifold
$(\mathcal{M}, g)$ without boundary,
$$ -\triangle_g \phi_\lambda=\lambda^2\phi_\lambda.                $$
Yau conjectured that for any smooth manifold, one should control the
upper and lower bound of nodal sets of classical eigenfunctions as
$$c\lambda\leq H^{n-1}( N_\lambda)\leq C\lambda       $$
where $C, c$ depends only on the manifold $\mathcal{M}$. The
conjecture is only verified for real analytic manifold by
Donnelly-Fefferman in \cite{DF}. For the smooth manifolds, the
conjecture is still not settled. Much progresses have been obtained
towards the lower bound of nodal sets. Colding and Minicozzi
\cite{CM}, Sogge and Zelditch \cite{SZ}, \cite{SZ1} independently
obtained that
$$ H^{n-1}(N_\lambda)\geq C\lambda^{\frac{3-n}{2}}      $$
for smooth manifolds. See also \cite{HS} for deriving the same bound
by adapting the idea in \cite{SZ}. For other related works about
lower bounds of nodal sets of classical eigenfunctions, see
\cite{Br}, \cite{M}, \cite{HL},  etc, to just mention a few. The
methods in \cite{CM} and \cite{SZ} are quite different. Specially,
the method in \cite{SZ} is based on a Dong-type identity in \cite{D}
about $L^1$ norm of $|\nabla \phi_\lambda|$ on the nodal set and the
$L^1$ norm of $\phi_\lambda$ on $\mathcal{M}$. Our goal is to adapt
their idea to the setting of non-local operator, i.e. Steklov
eigenfunctions.

\begin{theorem}
Let $\phi_\lambda$ be a normalized Steklov eigenfunction and $0$ be
a regular value of $\phi_\lambda$. Then there exists $C$ depending
only on $\mathcal{N}$ such that $$ H^{n-1}(N_\lambda)\geq
C\lambda^{\frac{3-n}{2}}.$$  \label{th1}
\end{theorem}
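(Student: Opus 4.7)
The plan is to adapt the Sogge--Zelditch strategy, which rests on a Dong-type identity relating an integral of $|\nabla \phi_\lambda|$ over the nodal set to a bulk $L^1$-norm together with sharp $L^p$ bounds. The core obstacle is the nonlocal character of $\Lambda$: one cannot directly combine Dong's distributional identity
\[
\Delta_g|\phi_\lambda| \;=\; \mathrm{sgn}(\phi_\lambda)\,\Delta_g\phi_\lambda \;+\; 2|\nabla_g\phi_\lambda|\, dH^{n-1}|_{N_\lambda}
\]
with the eigenvalue equation $\Lambda\phi_\lambda=\lambda\phi_\lambda$, since $\Delta_g$ and $\Lambda$ are different operators on $\mathcal{M}$.

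My first step would be to reduce $\Lambda$ to a local operator at leading order. The Dirichlet-to-Neumann operator is a classical pseudodifferential operator of order one on $\mathcal{M}$ whose principal symbol $|\xi|_g$ coincides with that of $\sqrt{-\Delta_g}$; consequently $\Lambda^2=-\Delta_g+R$ with $R$ of order zero, bounded on $L^2(\mathcal{M})$ uniformly in $\lambda$. Applied to the eigenfunction, this gives $-\Delta_g\phi_\lambda=\lambda^2\phi_\lambda-R\phi_\lambda$. Inserting this into Dong's identity and using that $\mathcal{M}$ is closed (so the total integral of $\Delta_g|\phi_\lambda|$ vanishes) produces
\[
 2\int_{N_\lambda}|\nabla_g\phi_\lambda|\, dH^{n-1} \;=\; \lambda^2\|\phi_\lambda\|_{L^1(\mathcal{M})} \;-\; \int_\mathcal{M}\mathrm{sgn}(\phi_\lambda)\,R\phi_\lambda\, dV_g.
\]
The error term is $O(1)$ by Cauchy--Schwarz and $\|\phi_\lambda\|_{L^2}=1$, so for $\lambda$ large the main term dominates and $\int_{N_\lambda}|\nabla\phi_\lambda|\, dH^{n-1}\geq c\lambda^2\|\phi_\lambda\|_{L^1}$.

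Next, Sogge-type $L^p$ bounds $\|\phi_\lambda\|_{L^p(\mathcal{M})}\leq C\lambda^{\sigma(p)}$ (inherited because $\Lambda$ shares its leading symbol with $\sqrt{-\Delta_g}$) combined with the interpolation $\|\phi_\lambda\|_{L^2}^2\leq\|\phi_\lambda\|_{L^1}^{2\theta}\|\phi_\lambda\|_{L^p}^{2(1-\theta)}$ at the Sogge-critical exponent $p=2(n+1)/(n-1)$ yield $\|\phi_\lambda\|_{L^1(\mathcal{M})}\geq c\lambda^{-(n-1)/4}$. To bound the nodal integral from above I would use Cauchy--Schwarz together with a scale-$1/\lambda$ trace estimate on $N_\lambda$,
\[
 \int_{N_\lambda}|\nabla_g\phi_\lambda|^2\, dH^{n-1} \;\leq\; C\lambda\,\|\nabla_g\phi_\lambda\|_{L^2(\mathcal{M})}^2 \;\leq\; C\lambda^3,
\]
obtained by covering $\mathcal{M}$ with balls of radius $1/\lambda$ and applying a local trace inequality on each. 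Combining these ingredients gives
\[
 H^{n-1}(N_\lambda)\;\geq\; c\,\lambda\,\|\phi_\lambda\|_{L^1}^2 \;\geq\; c\,\lambda^{1-(n-1)/2} \;=\; c\,\lambda^{(3-n)/2}.
\]

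The main technical obstacle is the first step: producing a usable pseudodifferential identity $\Lambda^2=-\Delta_g+R$ with controlled error on $R\phi_\lambda$, which requires understanding the sub-principal symbol of $\Lambda$ in a collar neighborhood of $\mathcal{M}$. A secondary delicate point is justifying the $\lambda$-scale trace inequality on $N_\lambda$. If either of these becomes too cumbersome, a fallback is to work directly on $\mathcal{N}$ with the harmonic extension: since $\Delta_h\phi_\lambda=0$ in the interior, the distributional identity $\Delta_h|\phi_\lambda|=2|\nabla\phi_\lambda|\,dH^n|_{\tilde N_\lambda}$ combined with $\partial_\nu\phi_\lambda=\lambda\phi_\lambda$ on $\mathcal{M}$ yields the clean identity $\lambda\|\phi_\lambda\|_{L^1(\mathcal{M})}=2\int_{\tilde N_\lambda}|\nabla\phi_\lambda|\,dH^n$, after which a boundary-layer analysis using the exponential decay of the harmonic extension in normal coordinates would relate the interior nodal $H^n$-measure near $\mathcal{M}$ to $H^{n-1}(N_\lambda)$.
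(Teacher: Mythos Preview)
Your overall strategy matches the paper's, but there are two concrete gaps. First, the remainder $R$ in $\Lambda^2=-\Delta_g+R$ is of order one, not zero: writing $\Lambda=\sqrt{-\Delta_g}+P_0$ with $P_0\in OPS^0$ gives $R=\sqrt{-\Delta_g}\,P_0+P_0\sqrt{-\Delta_g}+P_0^2\in OPS^1$. Your Cauchy--Schwarz bound on the error then yields only $O(\lambda)$, not $O(1)$, and since $\lambda^2\|\phi_\lambda\|_{L^1}\gtrsim\lambda^{2-(n-1)/4}$, the main term fails to absorb it once $n\geq 5$. The paper repairs this not by lowering the order of $R$ but by controlling the error in $L^1$ rather than $L^2$: interpolating through $L^{1+\delta}$ (using $L^p$-boundedness of $OPS^0$ for $1<p<\infty$ together with $\|\phi_\lambda\|_{L^\infty}\lesssim\lambda^{(n-1)/2}$ and $\|\phi_\lambda\|_{L^1}\gtrsim\lambda^{-(n-1)/4}$) yields $\|P\phi_\lambda\|_{L^1}\lesssim\lambda^\epsilon\|\phi_\lambda\|_{L^1}$ for any $P\in OPS^0$, hence $\|R\phi_\lambda\|_{L^1}\lesssim\lambda^{1+\epsilon}\|\phi_\lambda\|_{L^1}$, which is absorbed by $\lambda^2\|\phi_\lambda\|_{L^1}$ in every dimension.

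Second, your upper bound $\int_{N_\lambda}|\nabla\phi_\lambda|^2\leq C\lambda^3$ via a ``scale-$1/\lambda$ trace inequality'' is not justified: a trace constant on a hypersurface depends on its geometry, and you have no a priori control on the curvature or regularity of $N_\lambda$ at scale $1/\lambda$ (bounding $H^{n-1}(N_\lambda)$ is precisely the goal). The paper avoids this entirely by inserting the test function $f=\sqrt{1+|\nabla\phi_\lambda|^2}$ into the same divergence identity
\[
2\int_{N_\lambda}f\,|\nabla\phi_\lambda|\,ds=\int_{D_-}\mathrm{div}(f\nabla\phi_\lambda)\,dv_g-\int_{D_+}\mathrm{div}(f\nabla\phi_\lambda)\,dv_g,
\]
expanding the right-hand side, and applying Cauchy--Schwarz together with $\|\nabla^2\phi_\lambda\|_{L^2}\lesssim\lambda^2$. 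This gives $\int_{N_\lambda}|\nabla\phi_\lambda|^2\lesssim\lambda^3$ directly, with no reference whatsoever to the geometry of $N_\lambda$.
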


With a small modification of the proof, we are also able to get
similar lower bounds for general level sets near 0. Denote
$\alpha$-level sets of Steklov eigenfunctions as
$L^\alpha_\lambda=\{ x\in\mathcal{M}|\phi_\lambda=\alpha\}.$

\begin{corollary}

Let $\alpha$ be a regular value of $\phi_\lambda$. There exists a
positive constant $\epsilon(\mathcal{N})$ such that, for
$|\alpha|<\epsilon(\mathcal{N})\lambda^{-\frac{n-1}{4}}$,
$$H^{n-1}(L^\alpha_\lambda)\geq C\lambda^{\frac{3-n}{2}}$$
with $C$ depending only on $\mathcal{N}$.

\end{corollary}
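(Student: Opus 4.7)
The plan is to repeat the argument used for Theorem \ref{th1} with $\phi_\lambda$ replaced by the shifted function $\psi_\lambda = \phi_\lambda - \alpha$. The key observation is that constants are harmonic, so the harmonic extension $H\psi_\lambda$ is just $v = u - \alpha$, where $u = H\phi_\lambda$. Thus $v$ is still harmonic in $\mathcal{N}$, $v|_\mathcal{M} = \psi_\lambda$, $\nabla v = \nabla u$, and the nodal set of $\psi_\lambda$ on $\mathcal{M}$ is exactly $L^\alpha_\lambda$. This means all the pointwise differential estimates used in the proof of Theorem \ref{th1} transfer verbatim, since they depend only on $\nabla u$ and its derivatives, not on the value of $u$ itself.

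First I would run the Dong-type identity for $|v|$ on $\mathcal{N}$, i.e.\ integrate $\triangle |v|$ and apply the divergence theorem. Since $\partial_\nu v = \partial_\nu u = \lambda \phi_\lambda = \lambda(\psi_\lambda + \alpha)$ on $\mathcal{M}$, the resulting identity picks up an inhomogeneous term:
\[
c\int_{N_v}|\nabla v|\,dH^n \;=\; \lambda\int_\mathcal{M}|\psi_\lambda|\,dV_g \;+\; \lambda\alpha\int_\mathcal{M}\mathrm{sgn}(\psi_\lambda)\,dV_g,
\]
where $N_v$ is the nodal set of $v$ in $\mathcal{N}$. The first term on the right is the one used in the proof of Theorem \ref{th1}; the second is an error term bounded by $\lambda|\alpha|\,|\mathcal{M}|$.

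Next I would recover the $L^1$ lower bound for the shifted function. The proof of Theorem \ref{th1} supplies an estimate of the form $\|\phi_\lambda\|_{L^1(\mathcal{M})}\geq c\,\lambda^{-(n-1)/4}$, obtained via H\"older's inequality from the normalization $\|\phi_\lambda\|_{L^2}=1$ and the $L^\infty$ bound $\|\phi_\lambda\|_{L^\infty}\leq C\lambda^{(n-1)/2}$. By the triangle inequality,
\[
\int_\mathcal{M}|\psi_\lambda|\,dV_g \;\geq\; \|\phi_\lambda\|_{L^1(\mathcal{M})}-|\alpha|\,|\mathcal{M}| \;\geq\; c\lambda^{-(n-1)/4}-|\alpha|\,|\mathcal{M}|.
\]
Choosing $\epsilon(\mathcal{N})$ small enough that $|\alpha|\,|\mathcal{M}|\leq \tfrac12 c\lambda^{-(n-1)/4}$ whenever $|\alpha|<\epsilon(\mathcal{N})\lambda^{-(n-1)/4}$, I keep the lower bound $\int_\mathcal{M}|\psi_\lambda|\,dV_g\geq \tfrac12 c\lambda^{-(n-1)/4}$. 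With the same choice, the error term $\lambda|\alpha|\,|\mathcal{M}|$ is absorbed into the main term on the right-hand side of the identity above.

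Finally, I would plug these bounds into the remainder of the proof of Theorem \ref{th1}. Since the pointwise gradient bound $\|\nabla v\|_{L^\infty}=\|\nabla u\|_{L^\infty}$ and all the boundary-layer/slicing estimates used to pass from $\int_{N_v}|\nabla v|\,dH^n$ to $H^{n-1}(L^\alpha_\lambda)$ are $\alpha$-independent, the same chain of inequalities yields $H^{n-1}(L^\alpha_\lambda)\geq C\lambda^{(3-n)/2}$. The one point I would need to be careful about is verifying that the slicing/covering arguments used near $\mathcal{M}$ in the theorem do not implicitly use $\phi_\lambda=0$ as opposed to $\phi_\lambda=\alpha$; the main obstacle (such as it is) is confirming that the scale of the boundary layer, which in the theorem is dictated by $\lambda$, continues to dominate the perturbation of size $|\alpha|$ under the stated threshold $|\alpha|<\epsilon(\mathcal{N})\lambda^{-(n-1)/4}$. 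This is exactly the regime in which all perturbative terms from the shift remain strictly smaller than the main Dong-identity terms, so the argument carries through without further modification.
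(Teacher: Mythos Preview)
Your proposal rests on a misreading of the paper's method. The proof of Theorem~\ref{th1} is carried out \emph{entirely on the boundary manifold} $(\mathcal{M},g)$: the divergence theorem (\ref{gree})--(\ref{con}) is applied on $\mathcal{M}$, the Laplacian is $\triangle_g$ on $\mathcal{M}$, and the key input is the pseudodifferential relation $-\triangle_g=\Lambda^2+P_1+P_0^2$ from Lemma~\ref{lem1}. There is no harmonic extension into $\mathcal{N}$, no Dong identity for $|v|$ on the interior, no interior nodal set $N_v$, and no ``boundary-layer/slicing'' step. The objects you invoke to pass from $\int_{N_v}|\nabla v|\,dH^n$ to $H^{n-1}(L^\alpha_\lambda)$ simply do not appear in the paper, and you have not supplied any argument for that passage; as written this is a genuine gap, since the interior nodal set of the harmonic extension is an $n$-dimensional object with no a~priori control on its trace on $\mathcal{M}$.

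The ``small modification'' the paper has in mind is the following. Set $\psi_\lambda=\phi_\lambda-\alpha$, take $D_\pm=\{\psi_\lambda\gtrless 0\}$ so that $\partial D_\pm=L^\alpha_\lambda$, and rerun (\ref{gree})--(\ref{fff}) on $\mathcal{M}$ with $\psi_\lambda$ in place of $\phi_\lambda$. Since $\nabla_g\psi_\lambda=\nabla_g\phi_\lambda$ and $\triangle_g\psi_\lambda=\triangle_g\phi_\lambda$, Proposition~\ref{pro2} goes through unchanged with $N_\lambda$ replaced by $L^\alpha_\lambda$. In (\ref{fur}) the main term becomes
\[
-\lambda^2\int_{D_-}\phi_\lambda+\lambda^2\int_{D_+}\phi_\lambda
=\lambda^2\|\psi_\lambda\|_{L^1(\mathcal{M})}+\lambda^2\alpha\big(|D_+|-|D_-|\big),
\]
and the second summand is bounded by $\lambda^2|\alpha|\,|\mathcal{M}|$. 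Your triangle-inequality argument giving $\|\psi_\lambda\|_{L^1}\geq\|\phi_\lambda\|_{L^1}-|\alpha|\,|\mathcal{M}|\geq \tfrac{c}{2}\lambda^{-(n-1)/4}$ for $|\alpha|<\epsilon(\mathcal{N})\lambda^{-(n-1)/4}$ is exactly the right ingredient here; together with Lemma~\ref{lem4} this yields the analogue of Proposition~\ref{prop1} for $L^\alpha_\lambda$, and the Cauchy--Schwarz combination (\ref{las})--(\ref{lass}) finishes the proof. So keep your $L^1$ perturbation step, but discard the harmonic-extension framework and run the argument on $\mathcal{M}$ as in Section~3.
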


{\bf Acknowledgements:} It is our pleasure to thank Professor
Christopher D. Sogge for many fruitful discussions throughout the
preparation of this work. We appreciate his insightful and useful
comments, which helped to improve this paper much. We also thank
referees for constructive comments.

\section{Preliminaries}
In this section, we will review and prepare some general results
needed in the proof of Theorem 1. First, we need the following
result from \cite{T}.
\begin{lemma}
The Dirichlet-to-Neumann operator $\Lambda$ is an elliptic
self-adjoint pseudodifferential operator of order $1$ over
$\mathcal{M}.$ Moreover,
$$ \Lambda=\sqrt{-\triangle_g} \ \mbox{mod} \ OPS^0(\mathcal{M}).$$
\label{lem1}
\end{lemma}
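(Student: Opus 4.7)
The plan is to prove the lemma locally near the boundary by factoring the Laplacian $\triangle_h$ in boundary normal coordinates, which is the approach taken in Taylor's textbook. First I would introduce boundary normal (Fermi) coordinates $(t,x)$ in a collar neighborhood of $\mathcal{M}$ in $\mathcal{N}$, with $t\geq 0$ the distance to $\mathcal{M}$ and $x$ local coordinates on $\mathcal{M}$. In these coordinates the metric has the form $h = dt^2 + g_t(x)$ with $g_0=g$, and a direct computation gives
\begin{equation*}
\triangle_h \;=\; \partial_t^2 + E(t,x)\,\partial_t + \triangle_{g_t},
\end{equation*}
where $E(t,x)$ is a smooth scalar (essentially the mean curvature of the parallel hypersurface $\{t=\mathrm{const}\}$) and $\triangle_{g_t}$ is the Laplace--Beltrami operator of the slice metric $g_t$.

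Next, I would seek a factorization
\begin{equation*}
\triangle_h \;\equiv\; \bigl(\partial_t + B(t)\bigr)\bigl(\partial_t + A(t)\bigr) \pmod{OPS^{-\infty}},
\end{equation*}
with $A(t), B(t)$ one-parameter families of classical pseudodifferential operators of order $1$ on $\mathcal{M}$, depending smoothly on $t$. Expanding, we need $A+B = E$ and $BA + \partial_t A \equiv \triangle_{g_t}$ modulo smoothing. At the principal symbol level this forces $b_1 = -a_1$ and $a_1^2 = |\xi|_{g_t}^2$, so I pick the root $a_1(t,x,\xi) = |\xi|_{g_t}$; the full symbols of $A$ and $B$ are then built inductively in homogeneity order by solving first-order transport ODEs in $t$ for each homogeneous piece.

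Given this factorization, for $f\in H^{1/2}(\mathcal{M})$ I construct the harmonic extension $Hf$ in two steps: first solve the evolution equation $(\partial_t + A(t))\tilde u = 0$ with $\tilde u(0,\cdot) = f$ (well-posed on $[0,\delta]$ because $A$ is elliptic with positive principal symbol, so the forward flow smooths $f$), then correct by a $C^\infty$ term $v$ with $\triangle_h v = -\triangle_h \tilde u$ in $\mathcal{N}$ and $v|_{\mathcal{M}}=0$; since $\triangle_h \tilde u$ is smoothing by the factorization, $v$ is smooth. Because the outward unit normal of $\mathcal{N}$ satisfies $\partial_\nu = -\partial_t$ at $t=0$,
\begin{equation*}
\Lambda f \;=\; -\partial_t(Hf)|_{t=0} \;=\; A(0)f + Rf, \qquad R\in OPS^{-\infty}.
\end{equation*}
Hence $\Lambda\in OPS^1(\mathcal{M})$ is elliptic with principal symbol $|\xi|_g$, which is precisely the principal symbol of $\sqrt{-\triangle_g}$, so $\Lambda - \sqrt{-\triangle_g}\in OPS^0(\mathcal{M})$. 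Self-adjointness is independent of this symbolic construction: Green's identity yields $\langle \Lambda f, g\rangle_{L^2(\mathcal{M})} = \int_{\mathcal{N}} \langle \nabla Hf,\nabla Hg\rangle\,dV_h$, which is manifestly symmetric in $f,g$, and combined with the ellipticity just obtained this produces the claimed self-adjoint operator on $L^2(\mathcal{M})$ with an orthonormal basis of smooth eigenfunctions.

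The main obstacle is making the factorization rigorous: one has to solve the operator-valued Riccati equation $\partial_t A = \triangle_{g_t} - (E-A)A$ in the algebra of pseudodifferential operators on $\mathcal{M}$, which must be done symbol-by-symbol while tracking the smooth dependence of each homogeneous piece on $t$. A related subtlety is the sign ambiguity $a_1 = \pm |\xi|_{g_t}$; picking the dissipative root $+|\xi|_{g_t}$ is exactly what makes the forward problem $(\partial_t + A)\tilde u = 0$ well-posed on $[0,\delta]$ and thereby singles out the genuine harmonic extension among all solutions of the factored equation.
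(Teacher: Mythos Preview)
Your sketch is correct and is precisely the standard argument from Taylor's book \cite{T}, which the paper cites. Note, however, that the paper does not actually prove this lemma: it is simply quoted as a known result from \cite{T}, so there is no in-paper proof to compare against. Your outline faithfully reproduces Taylor's factorization method (boundary normal coordinates, the operator-valued Riccati construction of $A(t)$, and identification of $\Lambda$ with $A(0)$ modulo smoothing), and the self-adjointness via Green's identity is the usual complement.
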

Here, $OPS^m$ denotes the pseudodifferential operator of order $m$.
Since $\Lambda$ is an elliptic self-adjoint pseudodifferential
operator, by the general results in \cite{SS} (see also the book of
 Sogge \cite{S} or \cite{S1} for Laplacian-Beltrami operator), we have the following
$L^p$ norm estimates.
\begin{lemma}
Let $\phi_\lambda$ be the Steklov eigenfunction. One has the
estimates, for $p\geq 2$,
\begin{equation}
\|\phi_\lambda\|_{L^p(\mathcal{M})}\lesssim
(1+\lambda)^{\sigma(n,p)}\|\phi_\lambda\|_{L^2(\mathcal{M})},
\label{sog}
\end{equation} where
\begin{equation}
\sigma(n, p)= \left\{ \begin{array}{lll} \
n(\frac{1}{2}-\frac{1}{p})-\frac{1}{2}, \quad \frac{2(n+1)}{n-1}\leq
p\leq \infty, \nonumber \medskip \\
\frac{n-1}{2}(\frac{1}{2}-\frac{1}{p}), \quad 2\leq p \leq
\frac{2(n+1)}{n-1}.
\end{array}
\right.
\end{equation}
\label{lem2}
\end{lemma}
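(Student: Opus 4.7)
The plan is to invoke the abstract Sogge-type spectral cluster estimates of Seeger and Sogge \cite{SS}, which hold for any elliptic self-adjoint pseudodifferential operator of order one on a closed Riemannian $n$-manifold, and to apply them directly to $\Lambda$ on $\mathcal{M}$. Lemma \ref{lem1} supplies exactly the hypothesis required: $\Lambda$ is such an operator, with principal symbol $|\xi|_g$ coinciding with that of $\sqrt{-\Delta_g}$, so the underlying Hamiltonian bicharacteristic flow is just the geodesic flow of $(\mathcal{M},g)$.

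Concretely, I would first introduce the spectral cluster projector $\chi_\lambda$ onto the direct sum of eigenspaces of $\Lambda$ whose eigenvalues lie in the window $[\lambda,\lambda+1]$. Since $\phi_\lambda=\chi_\lambda\phi_\lambda$, the claim reduces to the operator-norm bound
$$\|\chi_\lambda\|_{L^2(\mathcal{M})\to L^p(\mathcal{M})}\lesssim (1+\lambda)^{\sigma(n,p)}.$$
To establish this, follow the classical H\"ormander--Sogge template: pick $\rho\in\mathcal{S}(\mathbb{R})$ with $\rho\geq\mathbf{1}_{[0,1]}$ and $\hat\rho$ supported in a small neighborhood of $0$, so that
$$\rho(\lambda-\Lambda)=\frac{1}{2\pi}\int\hat\rho(t)\,e^{it\lambda}\,e^{-it\Lambda}\,dt$$
dominates $\chi_\lambda$ in the operator-norm sense. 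For $|t|$ small, the half-wave propagator $e^{-it\Lambda}$ admits a Fourier integral operator parametrix whose phase solves the eikonal equation $|\nabla_x\varphi|_g=1$ for the geodesic flow, precisely as for $\sqrt{-\Delta_g}$; the $OPS^0$ discrepancy in Lemma \ref{lem1} is absorbed into an order-zero amplitude correction that does not affect the leading $\lambda$-power. Stationary phase then yields dispersive kernel bounds of size $\lambda^{(n-1)/2}$ on a frequency-localised piece, and a standard $TT^*$ plus interpolation argument outputs the two-regime Sogge exponents $\sigma(n,p)$, with the split at the Stein--Tomas-type critical index $p=2(n+1)/(n-1)$.

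The main point that needs care beyond Sogge's original treatment of $\sqrt{-\Delta_g}$ is to verify that the subprincipal and lower-order terms of $\Lambda$, encoded by the $OPS^0$ remainder in Lemma \ref{lem1}, neither alter the phase of the parametrix nor disturb the power of $\lambda$ in the final cluster bound. This is precisely what is shown in \cite{SS}, so I would cite their general theorem as a black box; that is what makes Lemma \ref{lem1} the crucial input, while no further structural feature of the Dirichlet-to-Neumann operator needs to be exploited at this stage.
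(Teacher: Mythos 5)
Your proposal is correct and matches the paper's treatment: the paper proves this lemma purely by citing the general Seeger--Sogge spectral cluster estimates for elliptic self-adjoint first-order pseudodifferential operators, with Lemma \ref{lem1} supplying the hypothesis, exactly as you do. Your additional sketch of the half-wave parametrix and $TT^*$/interpolation machinery is a faithful outline of what \cite{SS} proves, but the paper itself invokes that result as a black box just as you ultimately propose.
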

In the whole paper, the notation $A\lesssim B$ or $ A\gtrsim B$
denotes $A\leq CB$ or $A\geq CB$ for some generic constant $C$ which
does not depend on $\lambda$. If we follow exactly the same argument
as \cite{SZ}, which makes use of lemma \ref{lem2} for $p=\infty$, we
can obtain $L^p$ norm estimates for $p=1$, that is,
\begin{equation}
\|\phi_\lambda\|_{L^1(\mathcal{M})}\gtrsim
(1+\lambda)^{-\frac{n-1}{4}}\|\phi_\lambda\|_{L^2(\mathcal{M})}.
\label{less}
\end{equation}

We also need the $L^p$ bounds for the pseudodifferential operators.
\begin{lemma}
Suppose $P\in OPS^m(\mathcal{M})$. Then
$$\|P\phi_\lambda\|_{L^{p}(\mathcal{M})}\lesssim(1+\lambda)^m\|\phi_\lambda\|_{L^{p}(\mathcal{M})}, \quad
\forall  1<p<\infty. $$ Specially,
$$\|\nabla^m_g\phi_\lambda\|_{L^{p}(\mathcal{M})}\lesssim (1+\lambda)^{m}
\|\phi_\lambda\|_{L^{p}(\mathcal{M})}. $$  \label{lem3}
\end{lemma}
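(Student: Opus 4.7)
My plan is to reduce the desired estimate to the classical $L^p$-boundedness of zeroth-order pseudodifferential operators. The key observation is that, by Lemma \ref{lem1}, $\Lambda$ is elliptic, self-adjoint and of order $1$, and since it is the Dirichlet-to-Neumann map its spectrum lies in $[0,\infty)$. Hence $\mathrm{I}+\Lambda$ is a positive elliptic element of $OPS^{1}(\mathcal{M})$, and Seeley's theorem on complex powers of elliptic operators produces $(\mathrm{I}+\Lambda)^{-m}\in OPS^{-m}(\mathcal{M})$. The composition $Q:=P\,(\mathrm{I}+\Lambda)^{-m}$ therefore lies in $OPS^{0}(\mathcal{M})$.

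Next I would invoke the classical Calder\'on-Zygmund / H\"ormander $L^p$-boundedness theorem for zero-order pseudodifferential operators on a compact manifold (this is precisely the source of the restriction $1<p<\infty$, since the endpoints $p=1,\infty$ fail in general): for every $1<p<\infty$,
$$\|Qf\|_{L^p(\mathcal{M})}\;\lesssim\;\|f\|_{L^p(\mathcal{M})}.$$
Because $\Lambda\phi_\lambda=\lambda\phi_\lambda$, the spectral theorem for the self-adjoint operator $\Lambda$ gives $(\mathrm{I}+\Lambda)^{m}\phi_\lambda=(1+\lambda)^{m}\phi_\lambda$, whence
$$P\phi_\lambda \;=\; Q(\mathrm{I}+\Lambda)^{m}\phi_\lambda \;=\; (1+\lambda)^{m}\,Q\phi_\lambda,$$
and the first estimate of the lemma follows by taking $L^p$ norms and applying the boundedness of $Q$.

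For the special case $P=\nabla^{m}_{g}$ one observes that $\nabla^{m}_{g}$ is, componentwise, a differential operator of order $m$, hence an element of $OPS^{m}(\mathcal{M})$, so the second estimate is a direct instance of the first. The main obstacle I anticipate is verifying rigorously the two pseudodifferential-calculus ingredients above, namely Seeley's complex-power construction for $\mathrm{I}+\Lambda$ and the composition rule $OPS^{m}\circ OPS^{-m}\subset OPS^{0}$; both are entirely standard on the closed manifold $\mathcal{M}$ and require no new ideas beyond the calculus developed in \cite{T}.
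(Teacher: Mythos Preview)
Your proposal is correct and follows essentially the same approach as the paper: define $\tilde P = P(1+\Lambda)^{-m}\in OPS^{0}(\mathcal{M})$, invoke the $L^p$-boundedness of zeroth-order pseudodifferential operators for $1<p<\infty$, and use the eigenfunction identity $(1+\Lambda)^{m}\phi_\lambda=(1+\lambda)^{m}\phi_\lambda$. You have simply made explicit the standard ingredients (Seeley's powers, the composition rule) that the paper leaves implicit.
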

\begin{proof}
Define the operator $\tilde{P}:=P(1+\Lambda)^{-m}$, then
$\tilde{P}\in OPS^0(\mathcal{M})$. By the boundedness of zeroth
pseudodifferential operator over $L^p(\mathcal{M})$ in \cite{S} or
\cite{T}, the lemma follows easily.
\end{proof}

\section{Lower bounds of nodal sets}
In this section, we will obtain the lower bounds of nodal sets of
Steklov eigenfunctions. Since the Dirichlet-to-Neumann operator is a
non-local operator, we
 do not need  information from the manifold $(\mathcal{N},
h)$. In the following argument, all derivatives and calculations are
performed with respect to the manifold $(\mathcal{M}, g)$. We first
express the manifold $\mathcal{M}$ as the disjoint union
$$\mathcal{M}=\bigcup_{j=1}^{N_+(\lambda)} D_{j, +}
\cup \bigcup_{j=1}^{N_-(\lambda)} D_{j,-} \cup N_{\lambda},
$$
where $D_{j, +}$ and $D_{j, -}$ are the connected components of the
sets $\{x\in\mathcal{M}|\phi_\lambda>0\}$ and
$\{x\in\mathcal{M}|\phi_\lambda<0\}$. Using the same idea in
\cite{SZ}, we can treat each component separately and then add them
up. For simplicity, we just deal with two components. The same
argument carries out for many components.
 Denote
$$ D_{+}=\{x\in \mathcal{M}|\phi_\lambda(x)>0\} $$
and $$D_{-}=\{x\in \mathcal{M}|\phi_\lambda(x)<0\}.$$

 For the classical eigenfunctions of Laplacian-Beltrami operator, the singular set is codimension
 2. Then zero level sets are smooth submanifolds. It is also shown in \cite{U} that $0$ is regular
 value for eigenfunctions of second order elliptic differential
 operators. To the best of the authors' knowledge, it is still
 unknown whether it is true for Dirichlet-to-Neumann operators.
 By the Sard's theorem, it is known that almost every level set is
 regular.  Since
$0$ is assumed to be a regular value of $\phi_\lambda$, then
$N_\lambda$ is a smooth submanifold in $\mathcal{M}$ and the
boundary $\partial D_{\pm}=N_\lambda.$ By the Green formula, for any
$f\in C^\infty(\mathcal{M})$, we have
\begin{equation}
\int_{D_{\pm}} div(f\nabla \phi_\lambda)\,
dv_g=\int_{N_\lambda}<f\nabla \phi_\lambda, \nu> \,ds \label{gree}
\end{equation}
where $ds$ is the surface measure on $N_\lambda$ induced by the
metric $g$ on $\mathcal{M}$, $\nu$ is the exterior unit normal
vector on $N_\lambda$ with respect to $D_{\pm}$ respectively. Note
the Green formula is  taken on $\mathcal{M}$ with metric $g$. Since
$\phi_\lambda\equiv 0$ on $N_\lambda$, then $<\nabla \phi_\lambda,
\nu>  =\pm|\nabla \phi_\lambda|$ on $N_\lambda$. Thus, (\ref{gree})
becomes
\begin{equation}
\int_{D_{+}} div(f\nabla \phi_\lambda)\, dv_g=-\int_{N_\lambda}
f|\nabla \phi_\lambda|\,ds.\label{neg}
\end{equation}
Similarly, we have
\begin{equation}
\int_{D_{-}} div(f\nabla \phi_\lambda)\, dv_g=\int_{N_\lambda}
f|\nabla \phi_\lambda|\,ds.\label{pos}
\end{equation}
By (\ref{neg}) and (\ref{pos}), we obtain
\begin{equation}
2\int_{N_\lambda} f|\nabla \phi_\lambda|\,ds=\int_{D_{-}}
div(f\nabla \phi_\lambda)\, dv_g-\int_{D_{+}} div(f\nabla
\phi_\lambda)\, dv_g. \label{con}
\end{equation}
To obtain a lower bound of nodal sets of Steklov eigenfunctions, we
need to choose some appropriate test functions. It turns out that
$f\equiv 1$ and $f=\sqrt{1+|\nabla \phi_\lambda|^2}$ are good
choices. Let $f\equiv 1$. We are able to establish the following
proposition.
\begin{proposition}
There exists positive constant $K(\mathcal{N})$ such that, for
$\lambda>K(\mathcal{N})$,
$$\int_{N_\lambda} |\nabla
\phi_\lambda|\,ds \geq
\frac{\lambda^2}{4}\|\phi_\lambda\|_{L^{1}(\mathcal{M})}.
$$
\label{prop1}
\end{proposition}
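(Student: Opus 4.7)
The plan is to plug the test function $f\equiv 1$ into the integral identity (\ref{con}) and then exploit the fact from Lemma~\ref{lem1} that $\Lambda$ agrees with $\sqrt{-\triangle_g}$ modulo a zeroth-order pseudodifferential operator. With $f\equiv 1$, the left-hand integrand $f\nabla\phi_\lambda$ has divergence $\triangle_g\phi_\lambda$, so (\ref{con}) collapses to
\begin{equation*}
2\int_{N_\lambda}|\nabla\phi_\lambda|\,ds=\int_{D_-}\triangle_g\phi_\lambda\,dv_g-\int_{D_+}\triangle_g\phi_\lambda\,dv_g,
\end{equation*}
and the task becomes to extract from the right-hand side a main term of size $\lambda^2\|\phi_\lambda\|_{L^1(\mathcal{M})}$.

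To do this I would write $\Lambda=\sqrt{-\triangle_g}+Q$ for some $Q\in OPS^0(\mathcal{M})$ supplied by Lemma~\ref{lem1}, square to obtain $\Lambda^2=-\triangle_g+R$ with the remainder $R:=\sqrt{-\triangle_g}\,Q+Q\sqrt{-\triangle_g}+Q^2\in OPS^1(\mathcal{M})$, and apply this operator identity to the eigenfunction $\phi_\lambda$ to get $\triangle_g\phi_\lambda=-\lambda^2\phi_\lambda+R\phi_\lambda$. Substituting back and using $\phi_\lambda>0$ on $D_+$ and $\phi_\lambda<0$ on $D_-$, the principal contributions telescope into $\lambda^2\int_{\mathcal{M}}|\phi_\lambda|\,dv_g=\lambda^2\|\phi_\lambda\|_{L^1(\mathcal{M})}$, which is exactly the claimed main term, and the remainders combine into a single error
\begin{equation*}
\mathcal{E}:=\int_{D_-}R\phi_\lambda\,dv_g-\int_{D_+}R\phi_\lambda\,dv_g.
\end{equation*}

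The main obstacle is then to show that for $\lambda$ larger than some dimensional threshold $K(\mathcal{N})$ the error $\mathcal{E}$ is absorbable into half the principal term. My plan here is to dominate $|\mathcal{E}|\leq 2\|R\phi_\lambda\|_{L^1(\mathcal{M})}$, then apply Cauchy--Schwarz on the compact manifold $\mathcal{M}$ together with Lemma~\ref{lem3} at $p=2$ to obtain $\|R\phi_\lambda\|_{L^2(\mathcal{M})}\lesssim\lambda\|\phi_\lambda\|_{L^2(\mathcal{M})}$, and finally trade the $L^2$ norm for an $L^1$ norm through the reverse inequality (\ref{less}). The resulting estimate takes the shape $|\mathcal{E}|\lesssim\lambda^{1+(n-1)/4}\|\phi_\lambda\|_{L^1(\mathcal{M})}$, and once $\lambda$ is large this is bounded by $\tfrac{1}{2}\lambda^2\|\phi_\lambda\|_{L^1(\mathcal{M})}$, which rearranges into the inequality of the proposition. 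The essential technical difficulty is really the nonlocality of $\Lambda$: because $\phi_\lambda$ is only an eigenfunction of $\Lambda$ and not of $\triangle_g$, one cannot apply the classical Sogge--Zelditch/Dong calculation directly and must instead pay a first-order pseudodifferential price to pass from $\Lambda^2$ to $-\triangle_g$, then undo that cost using the reverse $L^1$-$L^2$ bound for Steklov eigenfunctions.
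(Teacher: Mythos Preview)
Your overall strategy coincides with the paper's: take $f\equiv 1$ in (\ref{con}), use Lemma~\ref{lem1} to write $-\triangle_g=\Lambda^2+\text{(order~1 remainder)}$, and isolate the main term $\lambda^2\|\phi_\lambda\|_{L^1(\mathcal{M})}$. The difference lies in how you bound the error $\mathcal{E}$, and there your argument has a dimension-dependent gap.

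You estimate $|\mathcal{E}|\le 2\|R\phi_\lambda\|_{L^1}\lesssim\|R\phi_\lambda\|_{L^2}\lesssim\lambda\|\phi_\lambda\|_{L^2}$ and then invoke (\ref{less}) to get $|\mathcal{E}|\lesssim\lambda^{1+(n-1)/4}\|\phi_\lambda\|_{L^1}$. This is absorbable into $\tfrac{1}{2}\lambda^2\|\phi_\lambda\|_{L^1}$ only when $1+(n-1)/4<2$, i.e.\ when $n\le 4$. For $n\ge 5$ the exponent is at least $2$ and the error is \emph{not} dominated by the main term, so the proposition does not follow. The passage through $L^2$ is too crude: it pays the full factor $\lambda^{(n-1)/4}$ from (\ref{less}), which grows with the dimension.

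The paper avoids this by proving a sharper estimate (its Lemma~4): for any $P\in OPS^0(\mathcal{M})$ and any $\epsilon>0$,
\[
\|P\phi_\lambda\|_{L^1(\mathcal{M})}\le C(\epsilon)\,\lambda^{\epsilon}\|\phi_\lambda\|_{L^1(\mathcal{M})}.
\]
The idea is to go through $L^{1+\delta}$ with $\delta$ small rather than $L^2$: H\"older gives $\|P\phi_\lambda\|_{L^1}\lesssim\|P\phi_\lambda\|_{L^{1+\delta}}\lesssim\|\phi_\lambda\|_{L^{1+\delta}}$ (here $OPS^0$ boundedness on $L^{1+\delta}$ is used), and then one interpolates $\|\phi_\lambda\|_{L^{1+\delta}}\le\|\phi_\lambda\|_{L^\infty}^{\delta/(1+\delta)}\|\phi_\lambda\|_{L^1}^{1/(1+\delta)}$ and controls the ratio $\|\phi_\lambda\|_{L^\infty}/\|\phi_\lambda\|_{L^1}$ by a fixed power of $\lambda$ via Lemma~\ref{lem2} and (\ref{less}). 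Writing $R=(1+\Lambda)\tilde P_0$ with $\tilde P_0\in OPS^0$, this yields $\|R\phi_\lambda\|_{L^1}\lesssim\lambda^{1+\epsilon}\|\phi_\lambda\|_{L^1}$ for every $\epsilon>0$, which is absorbable in \emph{all} dimensions. Replacing your $L^2$ step by this $L^{1+\delta}$ argument closes the gap.
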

\begin{proof}
Set $f=1$ in (\ref{con}), we have
\begin{equation}
2\int_{N_\lambda} |\nabla \phi_\lambda|\,ds=\int_{D_{-}} \triangle
\phi_\lambda\, dv_g-\int_{D_{+}} \triangle \phi_\lambda\, dv_g.
\label{fff}
\end{equation}
From lemma \ref{lem1}, we know that
$$\sqrt{-\triangle_g}=\Lambda+P_0,               $$
where $P_0\in OPS^0(\mathcal{M})$. It follows that
$$ -\triangle=\Lambda^2+P_1+P_0^2,$$
where $P_1=\Lambda P_0+P_0\Lambda \in OPS^1(\mathcal{M})$.
Therefore,
\begin{eqnarray}
\triangle \phi_\lambda &= &-(\Lambda^2+P_1+P_0^2)\phi_\lambda
\nonumber
\\
&=&-\lambda^2\phi_\lambda-P_1\phi_\lambda-P_0^2\phi_\lambda.
\nonumber
\end{eqnarray}
Substituting the above identity into (\ref{fff}) implies that

\begin{eqnarray}
2\int_{N_\lambda}|\nabla \phi_\lambda|\,ds
&=&-\int_{D_-}\lambda^2\phi_\lambda-\int_{D_-}P_1\phi_\lambda-
\int_{D_-}P^2_0\phi_\lambda \nonumber \\
&&+\int_{D_+}\lambda^2\phi_\lambda+\int_{D_+}P_1\phi_\lambda+
\int_{D_+}P^2_0\phi_\lambda \nonumber \\
&\geq&-\lambda^2\int_{D_-}\phi_\lambda+\lambda^2
\int_{D_+}\phi_\lambda-\int_{\mathcal{M}}|P_1\phi_\lambda|-\int_{\mathcal{M}}|P_0^2\phi_\lambda|
\nonumber \\
&=&\lambda^2\|\phi_\lambda\|_{L^1(\mathcal{M})}
-(1+\lambda)\|\tilde{P_0}\phi_\lambda\|_{L^1(\mathcal{M})}-\|P_0^2\phi_\lambda\|_{L^1(\mathcal{M})}
\label{fur},
\end{eqnarray}
where $\tilde{P_0}=P_1(1+\Lambda)^{-1}\in OPS^0(\mathcal{M})$. Now
there are two ``bad" terms in (\ref{fur}):
$$ \|\tilde{P_0}\phi_\lambda\|_{L^1(\mathcal{M})},
\quad \|P_0^2\phi_\lambda\|_{L^1(\mathcal{M})}.       $$
 We are able to control them by the $L^1$
norm of $\phi_\lambda$ multiplied by an $\epsilon$ power of
$\lambda$. We can establish the following lemma.
\begin{lemma}
Let $ P\in OPS^0(\mathcal{M})$. Then for any positive constant
$\epsilon$, there exists $C=C(\mathcal{N}, \epsilon)$ such that
\begin{equation}\|P\phi_\lambda\|_{L^1(\mathcal{M})}\leq C\lambda^\epsilon
\|\phi_\lambda\|_{L^1(\mathcal{M})}. \label{revi}
\end{equation}\label{lem4}
\end{lemma}
\begin{proof}
Let $\delta>0$. By H\"older's inequality,
$$\|P\phi_\lambda\|_{L^1(\mathcal{M})}\lesssim \|P\phi_\lambda\|_{L^{1+\delta}(\mathcal{M})}
\lesssim  \|\phi_\lambda\|_{L^{1+\delta}(\mathcal{M})}, $$ where we
have used lemma \ref{lem3}. As we know,
\begin{eqnarray}
\|\phi_\lambda\|_{L^{1+\delta}(\mathcal{M})}&\leq&
\|\phi_\lambda\|^{\frac{\delta}{1+\delta}}_{L^{\infty}(\mathcal{M})}
\|\phi_\lambda\|^{\frac{1}{1+\delta}}_{L^{1}(\mathcal{M})}\nonumber
\\
&=&\big(\frac{\|\phi_\lambda\|_{L^{\infty}(\mathcal{M})}}{
\|\phi_\lambda\|_{L^{1}(\mathcal{M})}}\big)^{\frac{\delta}{1+\delta}}\|\phi_\lambda\|_{L^{1}(\mathcal{M})}.
\label{est}
\end{eqnarray}
Thanks to lemma \ref{lem2}, we know
$$\|\phi_\lambda\|_{L^{\infty}(\mathcal{M})}\lesssim \lambda^{\frac{n-1}{2}}.        $$
By (\ref{less}), $$\|\phi_\lambda\|_{L^{1}(\mathcal{M})}\gtrsim
\lambda^{-\frac{n-1}{4}}.$$ Thus, from (\ref{est}), we have
$$\|\phi_\lambda\|_{L^{1+\delta}(\mathcal{M})}\lesssim \lambda^\frac{3(n-1)\delta}{4(1+\delta)}
\|\phi_\lambda\|_{L^{1}(\mathcal{M})}.$$ Selecting $\delta$ so small
that $\frac{3(n-1)\delta}{4(1+\delta)}\leq \epsilon$, then the
proposition is shown.
\end{proof}

With aid of lemma \ref{lem4}, we continue the proof of proposition
\ref{prop1}. Let's go back to (\ref{fur}). We want to control the
other two ``bad" terms in the left hand side of (\ref{fur}) by
$\lambda^2\|\phi_\lambda\|_{L^1(\mathcal{M})}$. Since (\ref{revi})
holds for any positive constant $\epsilon$, in order to achieve it,
one needs to choose  $0<\epsilon<1$. For instance, we may choose
$\epsilon=1/2$. Then we obtain
\begin{eqnarray}
2\int_{N_\lambda}|\nabla \phi_\lambda|\,ds &\geq &
\lambda^2\|\phi_\lambda\|_{L^1(\mathcal{M})}-C(1+\lambda)^{\frac{3}{2}}\|\phi_\lambda\|_{L^1(\mathcal{M})}.\nonumber
\end{eqnarray}
 Choosing $\lambda$ appropriately large which depends only on
$\mathcal{N}$,  we finally arrive at
\begin{equation}
2\int_{N_\lambda}|\nabla \phi_\lambda|\,ds \geq
\frac{\lambda^2}{2}\|\phi_\lambda\|_{L^1(\mathcal{M})}. \nonumber
\end{equation}
 We are done with the proof of proposition \ref{prop1}.
\end{proof}

Next we select the test function as $f=\sqrt{1+|\nabla
\phi_\lambda|^2}.$ We are able to prove the following proposition.
\begin{proposition}
There exists positive constant $C=C(\mathcal{N})$ such that
\begin{equation}
\int_{N_\lambda} |\nabla \phi_\lambda|^2\,ds \leq C(1+\lambda)^3
\label{upp}.
\end{equation}
\label{pro2}
\end{proposition}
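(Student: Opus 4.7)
The plan is to apply the identity (\ref{con}) with the test function $f=\sqrt{1+|\nabla\phi_\lambda|^2}$ and estimate the divergence on the right-hand side in $L^1(\mathcal{M})$. The key observation is that this particular choice of $f$ satisfies two useful inequalities at once: $f\geq|\nabla\phi_\lambda|$, which lets us bootstrap from the quantity $f|\nabla\phi_\lambda|$ on $N_\lambda$ to $|\nabla\phi_\lambda|^2$; and $|\nabla f|\leq|\nabla^2\phi_\lambda|$, which prevents us from losing a power of $\lambda$ when expanding the divergence.

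First, since $\sqrt{1+|\nabla\phi_\lambda|^2}\geq|\nabla\phi_\lambda|$, we have
\[
\int_{N_\lambda}|\nabla\phi_\lambda|^2\,ds\leq\int_{N_\lambda}f\,|\nabla\phi_\lambda|\,ds,
\]
so by (\ref{con}) it suffices to bound $\int_{\mathcal{M}}|\mathrm{div}(f\nabla\phi_\lambda)|\,dv_g$. Expanding, $\mathrm{div}(f\nabla\phi_\lambda)=f\triangle\phi_\lambda+\langle\nabla f,\nabla\phi_\lambda\rangle$, and we bound each term by Cauchy--Schwarz on $\mathcal{M}$.

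For the gradient term, a direct computation gives $\nabla f=f^{-1}\nabla^2\phi_\lambda(\nabla\phi_\lambda,\cdot)$, so using $f\geq|\nabla\phi_\lambda|$ we get $|\nabla f|\leq|\nabla^2\phi_\lambda|$ pointwise, hence $|\langle\nabla f,\nabla\phi_\lambda\rangle|\leq|\nabla^2\phi_\lambda|\,|\nabla\phi_\lambda|$. Cauchy--Schwarz together with Lemma \ref{lem3} (applied with $m=1,2$ and $p=2$, and recalling $\|\phi_\lambda\|_{L^2}=1$) yields
\[
\int_{\mathcal{M}}|\langle\nabla f,\nabla\phi_\lambda\rangle|\,dv_g\leq\|\nabla^2\phi_\lambda\|_{L^2}\,\|\nabla\phi_\lambda\|_{L^2}\lesssim(1+\lambda)^2\cdot(1+\lambda)=(1+\lambda)^3.
\]
For the remaining term, $\|f\|_{L^2}^2\leq|\mathcal{M}|+\|\nabla\phi_\lambda\|_{L^2}^2\lesssim(1+\lambda)^2$, so $\|f\|_{L^2}\lesssim1+\lambda$, and Lemma \ref{lem3} with $m=2$ gives $\|\triangle\phi_\lambda\|_{L^2}\lesssim(1+\lambda)^2$. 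Hence
\[
\int_{\mathcal{M}}|f\triangle\phi_\lambda|\,dv_g\leq\|f\|_{L^2}\|\triangle\phi_\lambda\|_{L^2}\lesssim(1+\lambda)^3.
\]
Combining both pieces with (\ref{con}) proves (\ref{upp}).

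There is no real obstacle once one notices the two pointwise inequalities above; the only mild subtlety is that $\nabla f$ must be controlled without picking up an extra factor from the square root in the denominator, which is precisely why replacing $|\nabla\phi_\lambda|$ by $\sqrt{1+|\nabla\phi_\lambda|^2}$ is useful. Everything else is just Cauchy--Schwarz followed by the Sobolev-type bounds of Lemma \ref{lem3}, and no pseudodifferential expansion of $\triangle$ in terms of $\Lambda$ is needed here, in contrast to the proof of Proposition \ref{prop1}.
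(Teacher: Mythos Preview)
Your proof is correct and follows essentially the same approach as the paper: the same test function $f=\sqrt{1+|\nabla\phi_\lambda|^2}$, the same pointwise inequalities $f\geq|\nabla\phi_\lambda|$ and $|\nabla f|\leq|\nabla^2\phi_\lambda|$, and the same endgame via Cauchy--Schwarz and Lemma~\ref{lem3}. The only cosmetic difference is that the paper merges the two divergence terms into the single expression $\int_{\mathcal{M}}(1+|\nabla\phi_\lambda|^2)^{1/2}|\nabla^2\phi_\lambda|\,dv_g$ before applying Cauchy--Schwarz, whereas you estimate $f\triangle\phi_\lambda$ and $\langle\nabla f,\nabla\phi_\lambda\rangle$ separately.
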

\begin{proof}
Let  $f=\sqrt{1+|\nabla \phi_\lambda|^2}$ in (\ref{con}). We derive
that
\begin{eqnarray}
2\int_{N_\lambda}\sqrt{1+|\nabla \phi_\lambda|^2} |\nabla
\phi_\lambda|\,ds&=&\int_{D_{-}} div(\sqrt{1+|\nabla
\phi_\lambda|^2}\nabla \phi_\lambda)\, dv_g \nonumber
\\&&-\int_{D_{+}} div(\sqrt{1+|\nabla \phi_\lambda|^2}\nabla
\phi_\lambda)\, dv_g \nonumber
\\
&\leq & \int_{\mathcal{M}}| div(\sqrt{1+|\nabla
\phi_\lambda|^2}\nabla \phi_\lambda)|\, d v_g \nonumber \\
&\lesssim& \int_{\mathcal{M}}(1+|\nabla
\phi_\lambda|^2)^{-1/2}|\nabla^2\phi_\lambda||\nabla
\phi_\lambda|^2\,dv_g
\nonumber \\
&&+\int_{\mathcal{M}}(1+|\nabla \phi_\lambda|^2)^{1/2}|\triangle
\phi_\lambda| \, dv_g. \nonumber
\end{eqnarray}
Furthermore, we get
\begin{eqnarray}
\int_{N_\lambda}|\nabla \phi_\lambda|^2 \,ds &\lesssim&
\int_{\mathcal{M}}(1+|\nabla
\phi_\lambda|^2)^{1/2}|\nabla^2\phi_\lambda|\,dv_g\nonumber \\
&\lesssim & \big(\int_{\mathcal{M}}(1+|\nabla \phi_\lambda|^2 \,
dv_g\big)^{\frac{1}{2}}\big(\int_{\mathcal{M}}|\nabla^2\phi_\lambda|^2\,dv_g\big)^{\frac{1}{2}}
\nonumber \\
 &\lesssim &(1+\lambda)^3,
\end{eqnarray}
where lemma \ref{lem3} has been used in last inequality.
\end{proof}

We are ready to give the proof of Theorem  \ref{th1}. We use an idea
in \cite{HS} by Hezari and Sogge.
\begin{proof}[Proof of Theorem \ref{th1}]
On one hand, by proposition \ref{pro2},
\begin{eqnarray}
\int_{N_\lambda} |\nabla \phi_\lambda|\,ds &\leq & (\int_{N_\lambda}
|\nabla \phi_\lambda|^2\,ds)^{\frac{1}{2}}|N_\lambda|^{\frac{1}{2}}
\nonumber \\
 &\lesssim& \lambda^{\frac{3}{2}} |N_\lambda|^{\frac{1}{2}}.
 \label{las}
\end{eqnarray}
On the other hand, from proposition \ref{prop1}, we have
\begin{eqnarray}
\int_{N_\lambda} |\nabla \phi_\lambda|\,ds &\geq
&\frac{\lambda^2}{4}\|\phi_\lambda\|_{L^1(\mathcal{M})} \nonumber \\
&\gtrsim & \lambda^{2-\frac{n-1}{4}}, \label{lass}
\end{eqnarray}
where we have used (\ref{less}) in last inequality. Combining the
estimates (\ref{las}) and (\ref{lass}), we arrive at
$$|N_\lambda| \gtrsim  \lambda^{\frac{3-n}{2}}.   $$
\end{proof}


\begin{thebibliography}{CL}

\bibitem[Br]{Br} J. Br\"uning, \"Uber Knoten von Eigenfunktionen des
Laplace-Beltrami-Operators, Math. Z. 158(1978), 15-21.

\bibitem[BL]{BL}K. Bellova and F.H. Lin, Nodal sets of Steklov eigenfunctions,
arXiv:1402.4323.

\bibitem[CM]{CM} T.H. Colding and W. P. Minicozzi II, Lower bounds for nodal sets
of eigenfunctions, Comm. Math. Phys. 306(2011), 777-784.

\bibitem[D]{D} R-T Dong, Nodal sets of eigenfunctions on Riemann surfaces, J. Differential Geom. 36(1992),
 493-506.

\bibitem[DF]{DF}H. Donnelly and C. Fefferman, Nodal sets of eigenfunctions
on Riemannian manifolds, Invent. Math. 93(1988), 161-183.

\bibitem[HL]{HL} Q. Han and F.H. Lin, Nodal sets of solutions of Elliptic
Differential Equations, book in preparation (online at
http://www.nd.edu/qhan/nodal.pdf).

\bibitem[HS]{HS} H. Hezari and C.D. Sogge, A natural lower bound for the
size of nodal sets, Anal. PDE. 5(2012), no. 5, 1133-1137.

\bibitem[M]{M}D. Mangoubi,
A remark on recent lower bounds for nodal sets,  Comm. Partial
Differential Equations 36(2011), no. 12, 2208-2212.

\bibitem[SS]{SS} A. Seeger and C.D. Sogge, Bounds for eigenfunctions
of differential operators, Indiana Math. J. 38(1989), 669-682.

\bibitem[S]{S} C.D. Sogge, Fourier integrals in classical analysis,
Cambridge Tracts in Mathematics, 105. Cambridge University Press,
Cambridge, 1993.

\bibitem[S1]{S1} C.D. Sogge, Concerning the $L^p$ norm of spectral clusters for
second-order elliptic operators on compact manifolds, J. Funct.
Anal. 77(1988), 123-138.

\bibitem[SZ]{SZ} C.D. Sogge and S. Zelditch, Lower bounds on the
Hausdorff measure of nodal sets, Math. Res. Lett. 18(2011), 25-37.

\bibitem[SZ1]{SZ1}C.D. Sogge and S. Zelditch,  Lower bounds on the Hausdorff measure of nodal sets II,
Math. Res. Lett. 19(2012), no.6, 1361-1364.

\bibitem[U]{U} K. Uhlenbeck, Genetric properties of eigenfunctions,
Amer. J. Math. 98(1976), no.4, 1059-1078.

\bibitem[T]{T}M. Taylor, Partial differential equations II. Qualitative studies
of linear equation, Applied mathematicsal Sciences 116,
Springer-Verlag, New York, 1996.

\bibitem[Zel]{Zel} S. Zelditch, Measure of nodal sets of analytic
steklov eigenfunctions, arXiv:1403.0647.

\end{thebibliography}
\end{document}